\newtheorem{theorem}{Theorem}[section]
\newtheorem{prop}[theorem]{Proposition}
\newtheorem{cor}[theorem]{Corollary}
\theoremstyle{definition}
\newtheorem{definition}[theorem]{Definition}
\theoremstyle{remark}
\newtheorem{remark}[theorem]{Remark}
\numberwithin{equation}{section}
\begin{document}

\newcommand{\spacing}[1]{\renewcommand{\baselinestretch}{#1}\large\normalsize}
\spacing{1.14}

\title{The Relation Between Automorphism Group
and Isometry Group of Left Invariant $ (\alpha,\beta)$-metrics}

\author {M. Nejadahmad}

\address{Masumeh Nejadahmad, Department of Mathematics, Isfahan University of Technology, Iran} \email{masumeh.nejadahmad@math.iut.ac.ir}

\author {H. R. Salimi Moghaddam}

\address{Hamid Reza Salimi Moghaddam, Department of Mathematics, Faculty of  Sciences, University of Isfahan,\\ Isfahan, 81746-73441, Iran. } \email{hr.salimi@sci.ui.ac.ir and salimi.moghaddam@gmail.com}

\keywords{Automorphism group, Isometry group, Lie group, Left Invariant $ (\alpha,\beta)$-metric.\\
AMS 2010 Mathematics Subject Classification: 53C60, 58B20, 20F28}

\date{\today}
\begin{abstract}
This work generalizes the results of an earlier paper by the second author, from Randers metrics to $(\alpha,\beta)$-metrics. Let $F$ be an $(\alpha,\beta)$-metric which is defined by a left invariant vector field and a left invariant Riemannian metric on a simply connected real Lie group $G$. We consider the automorphism and isometry groups of the Finsler manifold $(G,F)$ and their intersection. We prove that for an arbitrary left invariant vector field $X$ and any compact subgroup $K$ of automorphisms which $X$ is invariant under them, there exists an $(\alpha,\beta)$-metric such that $K$ is a subgroup of its isometry group.
\end{abstract}

\maketitle

\section{\textbf{Introduction}}

$(\alpha,\beta)$-metrics are one of the most attractive Finsler metrics because they have many applications in physics (see \cite{Asanov, Ingarden, Landau-Lifshitz}) and also they have simple structure. In fact they are expressed in terms of a Riemannian metric $\alpha =\sqrt{g_{ij}y^{i}y^{j}} $ and a 1-form $ \beta $ on a Riemannian manifold $(M,g)$ as follows:
\begin{equation}
    F = \alpha \phi(\frac{\beta}{\alpha}),
\end{equation}
where $\phi$ is a positive smooth map on $(-b_{0} , b_{0})\subset\Bbb{R}$ (see \cite{Chern-Shen}). $ F=\alpha \phi (\frac{\beta}{\alpha})$  is a Finsler metric if and only if $ \| \beta \|_{\alpha} < b_{0} $ and
\begin{equation}\label{inequality}
    \phi(s) - s\phi'(s) +(b^{2}-s^{2}) \phi''(s)>0, \ \ \mid s\mid \leq b < b_{0}.
\end{equation}
Some of famous Finsler metrics such as Randers, Kropina and Matsumoto metrics belong to this family. In definition of $(\alpha,\beta)$-metrics if we put $ \phi(s)=1+s$, $\phi(s)=\frac{1}{s}$ or $\phi(s)= \frac{1}{1-s}$, then we obtain Randers metric $F=\alpha+\beta$, Kropina metric $F=\frac{\alpha^{2}}{\beta}$ and Matsumoto metric $F=\frac{\alpha^{2}}{\alpha-\beta}$, respectively, although Kropina metric is not a regular Finsler metric (for more details see \cite{Antonelli-Ingarden-Matsumoto, Bao-Chern-Shen} and \cite{Chern-Shen}).
Among Riemannian metrics, the left invariant metrics on Lie groups $G$ are of particular importance and have therefore attracted the attention of many mathematicians (see \cite{Eberlein, Kobayashi-Nomizu, Milnor} and \cite{Nomizu}). \\
During the two last decades some studies have been done on the left invariant Finsler metrics on Lie groups. Specially, many geometric properties of left invariant $(\alpha,\beta)$ metrics have been studied (\cite{Deng-book, Deng-Hosseini-Liu-Salimi, Deng-Hou, Deng-Hu, Esrafilian-Salimi Moghaddam} and \cite{Salimi}).\\
It's well known that for an arbitrary 1-form $\beta$ on a Riemannian manifold $(M,g)$ there exists a unique vector field $X$ on $M$ such that for all $x \in M$ and $y \in T_{x}M$ we have
\begin{equation}
    g(y, X(x)) = \beta(x,y).
\end{equation}
Using the vector fields instead of the 1-forms is very useful to define left invariant $(\alpha,\beta)$-metrics on Lie groups. If $(G,g)$ is a Lie group with a left invariant Riemannian metric and $X \in \mathfrak{g}$ is a left invariant vector field on $G$ such that $\parallel X \parallel_{g}< b_{0}$, then easily we can see the following $(\alpha,\beta)$-metric is left invariant (see \cite{Deng-book} and \cite{Deng-Hosseini-Liu-Salimi}):
\begin{equation}\label{left invariant metric}
    F(h, Y_{h}) =\sqrt{\langle Y_{h},Y_{h} \rangle} \phi(\frac{\langle X_{h},Y_{h} \rangle}{\sqrt{\langle Y_{h},Y_{h} \rangle}}),
\end{equation}
for all $h\in G$ and $Y\in \mathfrak{g}$, where for simplicity we have used the notation ${\langle \cdot,\cdot \rangle}$ for the left invariant Riemannian metric $g$.\\
In \cite{Salimi}, the second author has studied the intersection between automorphism and isometry groups in the case of left invariant Randers metrics. The purpose of this paper is to develop some results of the article \cite{Salimi} to the case of left invariant $(\alpha,\beta)$-metrics. In this work, in definition of $(\alpha,\beta)$-metric, we will consider the map $\phi$ is a one-to-one map. We can see Randers, Kropina and Matsumoto metrics satisfy this condition. \section{\textbf{Automorphism and Isometry Groups of $ (\alpha,\beta)$- Lie groups}}
For any simply connected real Lie group $G$ with Lie algebra
$\frak{g}$ the following homomorphism:
\begin{equation}\label{T}
    \left\{%
\begin{array}{ll}
    T:Aut(G)\longrightarrow Aut(\frak{g}), \\
    \psi\longrightarrow T(\psi)=(d\psi)_e:T_eG=\frak{g}\longrightarrow\frak{g}.\\
\end{array}%
\right.
\end{equation}
is an isomorphism of Lie groups (see \cite{Eberlein}).
So the isomorphism $T$ makes a correspondence between their maximal compact subgroups.\\
Now let $\psi$ be an automorphism of the Lie group $G$. Then for an arbitrary $g\in G$ we can see:
\begin{equation}\label{commute with left translation}
    \psi \circ L_{g}= L_{\psi (g)} \circ \psi.
\end{equation}
Here we recall the following definition given by the second author in \cite{Salimi}.
\begin{definition}
(see \cite{Salimi}, definition 3.1) Suppose that $\psi\in Aut(G) $ and $X$ is a vector field on the Lie group $G$. $X$ is called a $\psi$-invariant vector field if $d\psi \circ X=X \circ \psi$.
\end{definition}
From now on we assume that $F$ is an $(\alpha,\beta)$-metric defined by the relation (\ref{left invariant metric}) such that $\phi$ is a one-to-one map. Also $I(G,\langle\cdot ,\cdot \rangle)$ and $I(G,F)$ denote the isometry groups of the Riemannian manifold $(G,\langle\cdot , \cdot\rangle) $ and the Finsler manifold $(G,F)$, respectively.
\begin{prop}\label{M1}
If $ \psi \in I(G,\langle\cdot , \cdot\rangle)$, then $\psi \in I(G,F)$ if and only if $X$ is $\psi$-invariant.
\end{prop}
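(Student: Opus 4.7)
The plan is to unfold the definition of Finsler isometry, use the Riemannian isometry hypothesis to simplify the resulting equation, and then invoke injectivity of $\phi$ together with non-degeneracy of the inner product to extract the $\psi$-invariance of $X$.

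More concretely, I would start by writing out what it means for $\psi$ to be a Finsler isometry of $(G,F)$: for every $h\in G$ and every $Y_h\in T_hG$,
\begin{equation*}
F\bigl(\psi(h),d\psi_h(Y_h)\bigr)=F(h,Y_h).
\end{equation*}
Substituting the explicit form (\ref{left invariant metric}), and using that $\psi$ is a Riemannian isometry so that $\langle d\psi_h(Y_h),d\psi_h(Y_h)\rangle=\langle Y_h,Y_h\rangle$, the outer $\sqrt{\langle Y_h,Y_h\rangle}$ factors cancel. What remains is an identity of the form $\phi(s_1)=\phi(s_2)$, where
\begin{equation*}
s_1=\frac{\langle X_{\psi(h)},d\psi_h(Y_h)\rangle}{\sqrt{\langle Y_h,Y_h\rangle}},\qquad s_2=\frac{\langle X_h,Y_h\rangle}{\sqrt{\langle Y_h,Y_h\rangle}}.
\end{equation*}

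Next I would use the standing assumption that $\phi$ is one-to-one, which upgrades the equality $\phi(s_1)=\phi(s_2)$ to $s_1=s_2$, and after clearing denominators to the identity
\begin{equation*}
\langle X_{\psi(h)},d\psi_h(Y_h)\rangle=\langle X_h,Y_h\rangle.
\end{equation*}
Applying the Riemannian isometry condition once more on the right-hand side gives $\langle X_h,Y_h\rangle=\langle d\psi_h(X_h),d\psi_h(Y_h)\rangle$, and therefore
\begin{equation*}
\langle X_{\psi(h)}-d\psi_h(X_h),\,d\psi_h(Y_h)\rangle=0
\end{equation*}
for every $Y_h\in T_hG$. Since $d\psi_h:T_hG\to T_{\psi(h)}G$ is a linear isomorphism, the vector $d\psi_h(Y_h)$ sweeps out all of $T_{\psi(h)}G$, so non-degeneracy of $\langle\cdot,\cdot\rangle$ forces $X_{\psi(h)}=d\psi_h(X_h)$, which is exactly the statement $d\psi\circ X=X\circ\psi$, i.e.\ $X$ is $\psi$-invariant. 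Running this chain of equivalences backwards gives the converse implication, so both directions are handled simultaneously.

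There is no real obstacle here; the only point that requires any care is keeping track of where the injectivity of $\phi$ and the Riemannian isometry of $\psi$ are each used, so that the argument genuinely proceeds by equivalences and yields both directions at once. The proof does not use the convexity inequality (\ref{inequality}) or the bound $\|X\|_\alpha<b_0$ beyond what is already built into the hypothesis that $F$ is a well-defined $(\alpha,\beta)$-metric.
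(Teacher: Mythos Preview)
Your proof is correct and follows essentially the same route as the paper's own argument: expand $F$, cancel the $\alpha$-factors using the Riemannian isometry hypothesis, apply injectivity of $\phi$, and use non-degeneracy of the inner product together with surjectivity of $d\psi_h$ to conclude. The only cosmetic difference is that the paper treats the two implications separately rather than as a single chain of equivalences, and accordingly notes (in the remark following the proposition) that the direction ``$X$ $\psi$-invariant $\Rightarrow$ $\psi\in I(G,F)$'' does not actually require $\phi$ to be one-to-one.
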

\begin{proof}
If $X$ is a $\psi$-invariant vector field then for any $g\in G$ we have:
\begin{eqnarray*}
F(\psi(g), d\psi_{g}Y_{g}) &=&  \sqrt{\langle d\psi_{g}Y_{g},d\psi_{g}Y_{g} \rangle}\phi(\frac{\langle X_{\psi(g)},d\psi_{g}Y_{g}\rangle}{\sqrt{\langle d\psi_{g}Y_{g},d\psi_{g}Y_{g} \rangle}}) \\
&=&\sqrt{\langle Y_{g},Y_{g} \rangle}\phi(\frac{\langle X_{\psi(g)},d\psi_{g}Y_{g}\rangle}{\sqrt{\langle Y_{g},Y_{g} \rangle}}) \\
&=&\sqrt{\langle Y_{g},Y_{g} \rangle}\phi(\frac{\langle d\psi_{g}X_{g},d\psi_{g}Y_{g}\rangle}{\sqrt{\langle Y_{g},Y_{g} \rangle}}) \\
&=&\sqrt{\langle Y_{g},Y_{g} \rangle} \phi(\frac{\langle X_{g},Y_{g} \rangle}{\sqrt{\langle Y_{g},Y_{g} \rangle}})\\
&=& F(g,Y_{g}).
\end{eqnarray*}
For the converse, consider $\psi\in I(G,F)$, so we have:
\begin{eqnarray*}
\sqrt{\langle Y_{g},Y_{g} \rangle} \phi(\frac{\langle X_{g},Y_{g} \rangle}{\sqrt{\langle Y_{g},Y_{g} \rangle}})&=& F(g,Y_{g}) \\
&=& F(\psi(g), d\psi_{g}Y_{g}) \\
&=&\sqrt{\langle d\psi_{g}Y_{g},d\psi_{g}Y_{g} \rangle}\phi(\frac{\langle X_{\psi(g)},d\psi_{g}Y_{g}\rangle}{\sqrt{\langle d\psi_{g}Y_{g},d\psi_{g}Y_{g}\rangle}}).
\end{eqnarray*}
On noting that $\phi$ is one-to-one and $ \psi \in I(G,\langle\cdot , \cdot\rangle) $, we have:
\begin{equation*}
    \langle d_{\psi(g)}X_{g},d\psi_{g}Y_{g} \rangle = \langle X_{g},Y_{g} \rangle = \langle X_{\psi(g)},d\psi_{g}Y_{g}\rangle,
\end{equation*}
therefore
\begin{equation*}
    \langle d_{\phi(g)}X_{g} - X_{\phi(g)}, d\psi_{g}Y_{g}\rangle =0.
\end{equation*}
Thus $ d_{\phi(g)}X_{g} = X_{\phi(g)} $ which shows that X is $ \psi $-invariant.
\end{proof}
\begin{remark}
In the above proposition, without the assumption $\phi$ is one-to-one, if the vector field $X$ is $\psi$-invariant then $\psi\in I(G,F)$.
\end{remark}
\begin{prop}
Suppose that $\langle\cdot , \cdot\rangle$ is an arbitrary left invariant Riemannian metric on a Lie group $G$ and $F=\alpha\phi(\frac{\beta}{\alpha})$ is the left invariant $(\alpha,\beta)$-metric defined by it and a vector field $X$, using the formula (\ref{left invariant metric}). If $\phi$ is a one-to-one map then the vector field $X$ is a left invariant vector field.
\end{prop}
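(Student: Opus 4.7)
The plan is to reduce the statement to the converse computation in Proposition~\ref{M1}, with left translations playing the role of automorphisms. The hypothesis that $F$ is left invariant says exactly that for every $g \in G$, the left translation $L_g$ is an isometry of $(G,F)$; and since $\langle\cdot,\cdot\rangle$ is left invariant, $L_g$ is automatically a Riemannian isometry as well. Although $L_g$ is not in general an automorphism of $G$, inspection of the converse direction of the proof of Proposition~\ref{M1} shows that the only property of $\psi$ used there is being a Riemannian isometry of $(G,\langle\cdot,\cdot\rangle)$ that is also an isometry of $F$. Consequently, the same chain of equalities applies verbatim with $L_g$ replacing $\psi$.

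Concretely, I would fix an arbitrary $g \in G$ and $Y \in \mathfrak{g}$, and compare $F(e,Y)$ with $F(g, dL_g Y)$, which must be equal by left invariance of $F$. Expanding both sides through (\ref{left invariant metric}) and cancelling the $\sqrt{\langle Y,Y\rangle}$ factors (by left invariance of $\langle\cdot,\cdot\rangle$) reduces the identity to one between values of $\phi$; the assumed injectivity of $\phi$ then yields
$$\langle X_e, Y\rangle = \langle X_g, dL_g Y\rangle.$$
A second application of the left invariance of $\langle\cdot,\cdot\rangle$ rewrites the left side as $\langle dL_g X_e, dL_g Y\rangle$, so $\langle X_g - dL_g X_e,\, dL_g Y\rangle = 0$ for every $Y \in \mathfrak{g}$. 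Since $dL_g : \mathfrak{g} \to T_g G$ is a linear isomorphism and the Riemannian metric at $g$ is nondegenerate, this forces $X_g = dL_g X_e$ for every $g$, which is exactly the definition of $X$ being a left invariant vector field.

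There is no substantive obstacle in the argument: the content is a direct transcription of the converse computation in Proposition~\ref{M1}. The only delicate point is noticing that Proposition~\ref{M1} itself cannot be cited as a black box here, because $L_g$ lies outside $\mathrm{Aut}(G)$ in general; one has to observe that automorphy is never used in the converse half of that proof, and then re-execute the short computation with $L_g$ in place of $\psi$.
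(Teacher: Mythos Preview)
Your proposal is correct and follows essentially the same approach as the paper: both arguments use left invariance of $F$ and of $\langle\cdot,\cdot\rangle$ together with injectivity of $\phi$ to obtain $\langle X_h,Y_h\rangle=\langle X_{L_gh},dL_gY_h\rangle$, and then conclude by nondegeneracy. The only cosmetic differences are that the paper works at a general base point $h$ (yielding $X_h=dL_{g^{-1}}X_{L_gh}$) whereas you specialize to $h=e$, and that you frame the computation explicitly as the converse half of Proposition~\ref{M1} transplanted to $L_g$, an observation the paper does not make explicit.
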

\begin{proof}
Let $g,h \in G$, then we have:
\begin{eqnarray*}
F(h,Y_{h})&=&\sqrt{\langle Y_{h},Y_{h} \rangle} \phi(\frac{\langle X_{h},Y_{h} \rangle}{\sqrt{\langle Y_{h},Y_{h} \rangle}})\\
&=&\sqrt{\langle dL_{g}Y_{h},dL_{g}Y_{h} \rangle}\phi(\frac{\langle X_{L_{g}h},dL_{g}Y_{h} \rangle}{\sqrt{\langle dL_{g}Y_{h},dL_{g}Y_{h} \rangle}})\\
&=& F(L_{g}h, dL_{g}Y_{h}).
\end{eqnarray*}
$\phi$ is one-to-one so we have:
\begin{equation*}
    \langle X_{h},Y_{h} \rangle = \langle X_{L_{g}h},dL_{g}Y_{h} \rangle,
\end{equation*}
which means that
\begin{equation*}
    \langle X_{h} - dL_{g^{-1}}X_{L_{g}h},Y_{h}\rangle =0.
\end{equation*}
The last equation shows that $X$ is a left invariant vector field.
\end{proof}
Suppose that $X$ is a left invariant vector field on a simply connected Lie group $G$ with the unit element $e$. Now we recall the following notations from \cite{Salimi}:
\begin{eqnarray*}
  Aut_{X}(G)&=&\lbrace  \psi \in Aut(G) | X ~is~ \psi-invariant\rbrace, \\
  Aut_{X}(\mathfrak{g})&=&\lbrace d\psi_{e} \in Aut(\mathfrak{g}) | d\psi_{e} X_{e}=X_{e}\rbrace.
\end{eqnarray*}

\begin{prop}
Let $G$ be a Lie group with Lie algebra $\mathfrak{g}$, $ \langle\cdot , \cdot\rangle $ be a left invariant Riemannian metric on it and $X$ be a left invariant vector field on $G$ such that $\sqrt{\langle X ,X \rangle}< b_{0}$. Suppose that $F$ is a left invariant $(\alpha,\beta)$-metric defined by $\langle\cdot , \cdot\rangle$, $X$ and a one-to-one map $\phi$. Assume that $\psi$ is an automorphism of $G$ such that $T(\psi)=d\psi_{e}\in Aut_{X}(\mathfrak{g})$ and also $d\psi_{e}$ is a linear isometry of the inner product space $(\mathfrak{g}, \langle\cdot , \cdot\rangle)$. Then, $ \psi\in I(G,F)$.
\end{prop}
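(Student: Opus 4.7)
The plan is to reduce everything to Proposition \ref{M1} by verifying its two hypotheses: that $\psi$ is a Riemannian isometry of $(G,\langle\cdot,\cdot\rangle)$, and that the left invariant vector field $X$ is $\psi$-invariant. Once both hold, the ``if'' direction of Proposition \ref{M1} yields $\psi\in I(G,F)$ immediately.

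First I would show that $\psi\in I(G,\langle\cdot,\cdot\rangle)$. The idea is to differentiate the identity \eqref{commute with left translation} at an arbitrary point $g\in G$ to obtain
\begin{equation*}
d\psi_{g} = (dL_{\psi(g)})_{e}\circ d\psi_{e}\circ (dL_{g^{-1}})_{g}.
\end{equation*}
Since $\langle\cdot,\cdot\rangle$ is left invariant, both $(dL_{g^{-1}})_{g}$ and $(dL_{\psi(g)})_{e}$ are linear isometries between the corresponding tangent spaces equipped with the restrictions of $\langle\cdot,\cdot\rangle$, and by hypothesis $d\psi_{e}$ is a linear isometry of $(\mathfrak{g},\langle\cdot,\cdot\rangle)$. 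Therefore $d\psi_{g}$ is a linear isometry for every $g$, and $\psi\in I(G,\langle\cdot,\cdot\rangle)$.

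Next I would verify that $X$ is $\psi$-invariant, i.e. $d\psi_{g}X_{g}=X_{\psi(g)}$ for every $g\in G$. Because $X$ is left invariant, $X_{g}=(dL_{g})_{e}X_{e}$ and $X_{\psi(g)}=(dL_{\psi(g)})_{e}X_{e}$. Combining this with the same decomposition of $d\psi_{g}$ obtained from \eqref{commute with left translation}, and using the assumption $d\psi_{e}X_{e}=X_{e}$ coming from $d\psi_{e}\in Aut_{X}(\mathfrak{g})$, a short chain of equalities gives
\begin{equation*}
d\psi_{g}X_{g}=(dL_{\psi(g)})_{e}\,d\psi_{e}\,(dL_{g^{-1}})_{g}(dL_{g})_{e}X_{e}=(dL_{\psi(g)})_{e}X_{e}=X_{\psi(g)}.
\end{equation*}

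Finally, having both hypotheses of Proposition \ref{M1} in hand, the ``if'' direction of that proposition (which, as recorded in the remark following it, needs only that $\psi$ is a Riemannian isometry and $X$ is $\psi$-invariant, not that $\phi$ be one-to-one) delivers $\psi\in I(G,F)$. I do not expect any serious obstacle here: the proof is essentially a bookkeeping exercise built on the standard factorization of $d\psi_{g}$ through $d\psi_{e}$, and the only conceptual input is recognising that left invariance of both the metric and the vector field turns the pointwise verification at arbitrary $g$ into the single algebraic condition already assumed at $e$.
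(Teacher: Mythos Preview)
Your proposal is correct and follows essentially the same strategy as the paper: verify the two hypotheses of Proposition~\ref{M1} (that $\psi$ is a Riemannian isometry and that $X$ is $\psi$-invariant) and then apply it. The only difference is that the paper defers the verification that $\psi\in I(G,\langle\cdot,\cdot\rangle)$ to the proof of Proposition~3.8 in \cite{Salimi} and leaves the $\psi$-invariance of $X$ implicit, whereas you spell out both computations explicitly via the factorization $d\psi_{g}=(dL_{\psi(g)})_{e}\circ d\psi_{e}\circ (dL_{g^{-1}})_{g}$ obtained from \eqref{commute with left translation}.
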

\begin{proof}
Similar to the proof of proposition 3.8 of \cite{Salimi} we have $\psi\in I(G,\langle\cdot , \cdot\rangle)$.
Now the proposition \ref{M1} proves $\psi\in I(G,F)$.
\end{proof}
From the above proposition we conclude that the isomorphism $ T: Aut(G) \rightarrow Aut(\mathfrak{g}) $ maps
$K=Aut_{X}(G)\cap I(G, F)$ onto $K'=Aut_{X}(\mathfrak{g}) \cap O(\mathfrak{g})$, where we use the symbol $O(\mathfrak{g})$ to represent the orthogonal group of $(\mathfrak{g},\langle\cdot , \cdot\rangle)$. Also we have the following corollary.
\begin{cor}
If $K' =Aut_{X}(\mathfrak{g})\cap O(\mathfrak{g})$ then both $K'$ and $K =T^{-1}(K')$ are compact Lie groups.
\end{cor}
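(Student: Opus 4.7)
The plan is to reduce everything to standard facts about closed subgroups of Lie groups and the isomorphism $T$ from (\ref{T}).

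First I would observe that $O(\mathfrak{g})$, the orthogonal group of the finite-dimensional inner product space $(\mathfrak{g},\langle\cdot,\cdot\rangle)$, is a compact Lie group, being a closed and bounded subgroup of $GL(\mathfrak{g})$. Next I would argue that $Aut_X(\mathfrak{g})$ is a closed subgroup of $Aut(\mathfrak{g})$: the evaluation map $\mathrm{ev}_{X_e}\colon Aut(\mathfrak{g})\to \mathfrak{g}$ defined by $d\psi_e\mapsto d\psi_e X_e$ is continuous, so $Aut_X(\mathfrak{g})=\mathrm{ev}_{X_e}^{-1}(\{X_e\})$ is closed. Since $Aut(\mathfrak{g})$ is itself a closed subgroup of $GL(\mathfrak{g})$, $Aut_X(\mathfrak{g})$ is closed in $GL(\mathfrak{g})$ as well.

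Next I would form the intersection $K'=Aut_X(\mathfrak{g})\cap O(\mathfrak{g})$. As an intersection of two closed subgroups of $GL(\mathfrak{g})$, $K'$ is closed, and because it is contained in the compact group $O(\mathfrak{g})$ it is automatically compact. By Cartan's closed subgroup theorem $K'$ is a Lie subgroup of $GL(\mathfrak{g})$, so $K'$ is a compact Lie group.

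Finally I would transfer the conclusion across the isomorphism $T\colon Aut(G)\to Aut(\mathfrak{g})$. Since $T$ is an isomorphism of Lie groups (in particular a homeomorphism), the preimage $K=T^{-1}(K')$ is a closed subgroup of $Aut(G)$, and hence a Lie group by the closed subgroup theorem, while compactness passes through $T^{-1}$ because a continuous image (here under $T^{-1}$) of a compact set is compact. Thus $K$ is a compact Lie group, isomorphic to $K'$ via $T$. No step looks like a serious obstacle; the only point that demands care is checking that $Aut_X(\mathfrak{g})$ is genuinely closed, which is immediate from the continuity of the evaluation map.
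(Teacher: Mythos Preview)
Your argument is correct and matches the paper's own proof almost exactly: the paper also shows $Aut_X(\mathfrak{g})$ is closed by taking the preimage of a point under the continuous map $\Upsilon(\psi)=\psi(X_e)-X_e$, and then concludes that $K'$ is a closed (hence compact) subgroup of $O(\mathfrak{g})$. Your write-up is in fact more complete, as you spell out the appeal to Cartan's closed subgroup theorem and the transfer to $K$ via the Lie group isomorphism $T$, which the paper leaves implicit.
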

\begin{proof}
Suppose that the function $\Upsilon: Aut(\mathfrak{g})\longrightarrow \mathfrak{g}$ is defined by $\Upsilon(\psi):=\psi(X_e)-X_e$. Easily we see that $\Upsilon$ is a continuous map. Therefor $Aut_{X}(\mathfrak{g})=\Upsilon^{-1}\{0\}$ is a closed subset of $Aut(\mathfrak{g})$. So $K'$ is a closed subgroup of $O(\mathfrak{g}) $.
\end{proof}
\begin{prop}
Suppose that $X$ is a left invariant vector field on $G$. If $K$ is an arbitrary compact subgroup of $Aut_{X}(G)$ then there exists a left invariant $(\alpha,\beta)$-metric $F$ on $G$ such that $K\subset Aut_{X}(G)\cap I(G,F)$, where $F$ is defined by the left invariant vector field $X$, a left invariant Riemannian metric $\langle\cdot , \cdot\rangle$, and a one-to-one map $\phi$ such that $\langle X , X\rangle<b_0$.
\end{prop}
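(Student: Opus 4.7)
The plan is to build a left invariant Riemannian metric on $G$ for which every element of $K$ acts as an isometry, and then invoke the previous proposition. I would proceed in three steps: algebra transfer, Haar averaging, and a routine rescaling to secure the norm bound $\|X\|<b_0$.

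First, using the Lie group isomorphism $T$ of (\ref{T}), I transfer $K$ to $K':=T(K)\subset Aut(\mathfrak{g})$, which is again compact. Because $K\subset Aut_X(G)$ and $X$ is left invariant, each $\psi\in K$ satisfies $d\psi_e(X_e)=X_{\psi(e)}=X_e$, so $K'\subset Aut_X(\mathfrak{g})$.

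Second, I fix any background inner product $(\cdot,\cdot)_0$ on $\mathfrak{g}$ and average it over $K'$ against the normalized Haar measure:
\begin{equation*}
\langle u,v\rangle_{\mathfrak{g}}:=\int_{K'}(\sigma u,\sigma v)_0\,d\sigma.
\end{equation*}
Compactness of $K'$ guarantees that this integral defines an inner product on $\mathfrak{g}$, and by construction every element of $K'$ is a linear isometry of $(\mathfrak{g},\langle\cdot,\cdot\rangle_{\mathfrak{g}})$. I then extend $\langle\cdot,\cdot\rangle_{\mathfrak{g}}$ by left translation to a left invariant Riemannian metric $\langle\cdot,\cdot\rangle$ on $G$. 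Replacing $\langle\cdot,\cdot\rangle$ by $c\langle\cdot,\cdot\rangle$ for a sufficiently large $c>0$ if needed (a rescaling that preserves both left invariance and the $K'$-invariance on $\mathfrak{g}$), I arrange $\sqrt{\langle X,X\rangle}<b_0$.

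Third, I choose a one-to-one $\phi$ satisfying (\ref{inequality}) on the relevant interval --- the Randers choice $\phi(s)=1+s$ does the job --- and define $F$ by (\ref{left invariant metric}). For every $\psi\in K$, the differential $d\psi_e$ lies in $Aut_X(\mathfrak{g})$ by the first step and is a linear isometry of $(\mathfrak{g},\langle\cdot,\cdot\rangle)$ by the second, so the preceding proposition yields $\psi\in I(G,F)$. Combined with the hypothesis $K\subset Aut_X(G)$, this delivers $K\subset Aut_X(G)\cap I(G,F)$. The only mildly delicate point is the norm constraint $\|X\|<b_0$, and it is handled cost-free by the rescaling above; the remainder is standard Haar averaging followed by a direct appeal to the earlier proposition, so no serious obstacle is expected.
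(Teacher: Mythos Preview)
Your argument is essentially the paper's: build a $K$-invariant left invariant Riemannian metric (the paper cites \cite{Eberlein} for this, while you spell out the Haar-averaging on $\mathfrak{g}$ that underlies it), rescale so that $\sqrt{\langle X,X\rangle}<b_0$, and then invoke an earlier proposition to conclude $K\subset I(G,F)$. One small slip to fix: to shrink $\sqrt{\langle X,X\rangle}$ you must replace $\langle\cdot,\cdot\rangle$ by $c\langle\cdot,\cdot\rangle$ with $c>0$ sufficiently \emph{small} (equivalently, divide by a large $N$ as in the paper), not large; apart from that the proof goes through as written.
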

\begin{proof}
In \cite{Eberlein}, it is shown that there exists a left invariant Riemannian metric $\langle\cdot ,\cdot\rangle_{0}$ on $G$ such that $K\subset Aut(G)\cap I(G,\langle\cdot,\cdot\rangle_{0})$. Thus $K \subset Aut_{X}(G)\cap I(G,\langle,\rangle_{0})$. Now we choose $N\in\Bbb{N}$ sufficiently large such that $\sqrt{\frac{1}{N}\langle X,X\rangle_{0} }< b_{0}$. Let $\langle\cdot , \cdot\rangle = \frac{1}{N}\langle\cdot , \cdot\rangle_{0}$. Clearly,  the Riemannian manifolds $(G,\langle\cdot , \cdot\rangle)$ and $(G,\langle\cdot , \cdot\rangle_{0})$ have the same isometry groups. If $F$ is the $(\alpha,\beta)$-metric defined by $X$, the Riemannian metric $\langle\cdot , \cdot\rangle$ and the map $\phi$, then proposition \ref{M1} completes the proof.
\end{proof}
\begin{prop}
Assume that $G$ is a simply connected Lie group and $X$ is a left invariant vector field on $G$. Then
there exists a left invariant $(\alpha,\beta)$-metric $F$ such that $K= Aut_{X}(G)\cap I(G, F) $ is a maximal compact subgroup of $Aut_{X}(G)$. Moreover $F$ is defined by the left invariant vector field $X$, a left invariant Riemannian metric $\langle\cdot , \cdot\rangle$, and a one-to-one map $\phi$ such that $\langle X , X\rangle<b_0$.
\end{prop}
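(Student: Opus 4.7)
The plan is to mirror the construction of the preceding proposition, but instead of starting with an arbitrary compact subgroup $K$ of $Aut_X(G)$, I would start from a maximal compact subgroup of $Aut_X(\mathfrak{g})$ and engineer the Riemannian metric so that its orthogonal group intersects $Aut_X(\mathfrak{g})$ in exactly this maximal compact subgroup.

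First I would invoke the existence of a maximal compact subgroup of the Lie group $Aut_X(\mathfrak{g})$; call it $K'$. Since $K'$ is a compact group acting linearly on the finite-dimensional vector space $\mathfrak{g}$, a standard Haar-measure averaging argument produces an inner product $\langle\cdot,\cdot\rangle_0$ on $\mathfrak{g}$ invariant under $K'$, so that $K'\subset O(\mathfrak{g},\langle\cdot,\cdot\rangle_0)$. Consequently $K' \subset Aut_X(\mathfrak{g})\cap O(\mathfrak{g})$. The reverse inclusion is the key observation: by the corollary just proved, $Aut_X(\mathfrak{g})\cap O(\mathfrak{g})$ is a compact subgroup of $Aut_X(\mathfrak{g})$ containing $K'$, and by maximality of $K'$ among compact subgroups, equality $K'=Aut_X(\mathfrak{g})\cap O(\mathfrak{g})$ holds.

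Next I would transport this inner product to a left invariant Riemannian metric on $G$ by left translation, and, just as in the previous proposition, rescale it by a factor $1/N$ for $N\in\mathbb{N}$ sufficiently large so that the resulting metric $\langle\cdot,\cdot\rangle$ still has $K'$ as its orthogonal intersection (rescaling does not change the orthogonal group) and satisfies $\langle X,X\rangle < b_0$. Picking any one-to-one $\phi$ satisfying the regularity condition (\ref{inequality}), I would form the left invariant $(\alpha,\beta)$-metric $F$ via (\ref{left invariant metric}). By the discussion following the preceding proposition, the isomorphism $T$ restricts to a bijection from $K:=Aut_X(G)\cap I(G,F)$ onto $Aut_X(\mathfrak{g})\cap O(\mathfrak{g})$, which we have identified with $K'$.

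Finally, I would conclude maximality of $K$ inside $Aut_X(G)$ by pulling back via $T$. Indeed, $T$ is an isomorphism of Lie groups mapping $Aut_X(G)$ onto $Aut_X(\mathfrak{g})$, and the maximal compact subgroups of two isomorphic Lie groups correspond under the isomorphism. Since $K'$ is maximal compact in $Aut_X(\mathfrak{g})$, $K=T^{-1}(K')$ is maximal compact in $Aut_X(G)$, which is the required conclusion. I expect the main obstacle to be the equality $K' = Aut_X(\mathfrak{g})\cap O(\mathfrak{g})$; this rests entirely on using maximality of $K'$ against the compactness statement from the corollary, so it is more a matter of piecing together facts already available than of a new calculation.
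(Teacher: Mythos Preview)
Your proposal is correct and follows essentially the same approach as the paper: both start from a maximal compact subgroup (you in $Aut_X(\mathfrak{g})$, the paper directly in $Aut_X(G)$), build an adapted left invariant Riemannian metric, rescale so that $\langle X,X\rangle<b_0$, form $F$, and then use maximality against the compactness supplied by the corollary to force equality. The paper's version is terser because it simply cites the preceding proposition instead of redoing the Haar-averaging and rescaling steps, and it stays on the group side rather than passing through $T$, but the underlying argument is identical.
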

\begin{proof}
Suppose that $K$ is a maximal compact subgroup of $Aut_{X}(G)$.  The above proposition shows that there is a left invariant $(\alpha,\beta)$-metric on $G$ such that $K\subset Aut_{X}(G)\cap I(G,F) $. Now the maximality of $K$ completes the proof.
\end{proof}

As example, we see that all the above results are valid for Randers, Kropina and Matsumoto metrics (Also see \cite{Salimi} for Randers metric).

\end{document}